\documentclass[11pt]{amsart}
\usepackage{geometry}              
\geometry{left=1.5in, right=1.5in, bottom=1.5in, top=1.5in, includefoot}
\geometry{letterpaper}
\usepackage{hyperref}
\usepackage{MnSymbol}
\usepackage{mathrsfs}
\usepackage{amsmath,amsthm}
\usepackage{tikz}
\usetikzlibrary{
  knots,
  hobby,
  decorations.pathreplacing,
  shapes.geometric,
  calc,
  decorations.markings
}
\usepgfmodule{decorations}
\usepackage{float}
\usepackage{braids}
\usepackage{tikz-cd}

\usepackage{wrapfig}
\usepackage{lipsum} 
\usetikzlibrary{matrix}
\newenvironment{psmallmatrix}
  {\left(\begin{smallmatrix}}
  {\end{smallmatrix}\right)}

\oddsidemargin=-0.1in 
\evensidemargin=-0.1in 
\textwidth=6.6in
\topmargin=-0.5in 
\textheight=9.1in

\theoremstyle{plain}
\theoremstyle{definition}

\numberwithin{equation}{section}

\DeclareMathAlphabet\mathbb{U}{msb}{m}{n}

\sloppy

\theoremstyle{plain}
\theoremstyle{definition}
\newtheorem{Theorem}{Theorem}[section]
\newtheorem{quest}[Theorem]{Question}

\newtheorem{Lemma}[Theorem]{Lemma}
\newtheorem{Proposition}[Theorem]{Proposition}

\begin{document}

\title{The language of geodesics for the discrete Heisenberg group}

\author{Ilya Alekseev}
\author{Ruslan Magdiev}

\address{Laboratory of Modern Algebra and Applications, St. Petersburg State University, 14th Line, 29b, Saint Petersburg, 199178 Russia}
\email{ilyaalekseev@yahoo.com}
\address{Laboratory of Continuous Mathematical Education (School 564 of St. Petersburg), nab. Obvodnogo kanala 143, Saint Petersburg, Russia}
\email{ruslanmagdy@gmail.com}

\keywords{Dead end elements, geodesic words, minimal perimeter polyominoes}

\begin{abstract}
In this paper, we give a complete description of the language of geodesic words for the discrete Heisenberg group $H(\mathbb{Z})$ with respect to the standard two-element generating set. More precisely, we prove that the only dead end elements in $H(\mathbb{Z})$ are nontrivial elements of the commutator subgroup. We give a description of their geodesic representatives, which are called dead end words. The description is based on a minimal perimeter polyomino concept. Finally, we prove that any geodesic word in $H(\mathbb{Z})$ is a prefix of a dead end word.
\end{abstract}

\maketitle

\section{Introduction}

We study combinatorial group theory aspects of the discrete Heisenberg group 
\begin{align*}
H(\mathbb{Z}) = \left\lbrace \left. \begin{psmallmatrix}1 & n & k \\0 & 1 & m\\ 0 & 0 & 1\end{psmallmatrix} \right\vert n,m,k \in \mathbb{Z}\right\rbrace.
\end{align*}
This group a subgroup of the Heisenberg group $H(\mathbb{R})$ of similar upper-triangular matrices with real coefficients. The latter is a three dimensional Lie subgroup of $GL(3,\mathbb{R})$ with Nil geometry. The discrete Heisenberg group has the standard presentation
\begin{align*}
H(\mathbb{Z}) \cong \langle a,b \mid [[a,b],a] = 1, \ [[a,b],b]=1\rangle.
\end{align*}
with generators $a,b$ that correspond to $\begin{psmallmatrix}1 & 1 & 0 \\0 & 1 & 0\\ 0 & 0 & 1\end{psmallmatrix}$ and $\begin{psmallmatrix}1 & 0 & 0 \\0 & 1 & 1\\ 0 & 0 & 1\end{psmallmatrix}$ respectively.

By the Cayley graph of a group $G$ with a finite generating set $S$ we mean a one-dimensional cell complex ${\rm Cay}(G,S)$, whose vertices are elements of $G$ and in which from each vertex $g \in G$ there is an edge to $gs$ for each $s \in S$. It has a metric space structure $d_{G,S}$ in which the length of any edge is $1$. We define the length of an element as follows: $l(g) = d_{G,S}(1, g)$, where $1$ is the identity element of $G$. A word $w \in F(S)$ is called geodesic if $l([w]) = |w|$, where $|w|\geq 0$ is a length of the word $w$.

\subsection{Known results on $H(\mathbb{Z})$}

One of the first results on $H(\mathbb{Z})$ that uses the geometric interpretation of words on the Cayley graph was obtained in \cite{Shapiro}. More precisely, non-regularity of the language of geodesic words and almost convexity of $H(\mathbb{Z})$ were proved, an explicit formula for the growth of $H(\mathbb{Z})$ was obtained (see also \cite{DS}), and information about end letters of geodesic words was presented. Another method of computing growth series of $H(\mathbb{Z})$ can be found in \cite{ST}. A sketch of the method is given in book \cite{OH}, where one asks for geodesic growth series formula of $H(\mathbb{Z})$, see Office Hour 12, Projects 5,6. An explicit length formula for elements of $H(\mathbb{Z})$ is given in \cite{SB}. Another similar formula and some information about geodesic words in $H(\mathbb{Z})$ can be found in \cite{Juillet}. An asymptotic behavior of the number of geodesics in $H(\mathbb{Z})$ is studied in \cite{VM1}.

A function $f(n)$ is called rational if it can be expressed in the form $f(n) = P_1(n) \lambda_1^n + \ldots + P_m(n) \lambda_m^n$, where $m \geq 1$, $P_i(n)$ are polynomials, and $\lambda_i \in \mathbb{R}$. Recall that the language of geodesics for a group with a generating set $S$ is called rational if the geodesic growth function $\gamma(n) = |\{w \in F(S) \mid l([w]) = |w| = n\}|$ is rational. It is well known that if a language is regular, then it is also rational. There is the 2016 year problem in geodesic growth theory related to the discrete Heisenberg group.

\begin{quest}[List of open questions in \cite{BC}, Question 3]
Is there a group with solvable Word Problem and irrational geodesic growth?
\end{quest}
The group $H(\mathbb{Z})$ is one of the simplest examples of a group with a non-regular language of geodesics. The authors of the question believe that $H(\mathbb{Z})$ is the desired example. Note that on cannot solve the problem above without any insight into the language of geodesics.

A right-infinite word is called geodesic if all its finite parts are geodesic. In \cite{VM} the language of infinite geodesic words in $H(\mathbb{Z})$ was described explicitly. It turned out that in $H(\mathbb{Z})$ there is a finite geodesic word that is not contained in any infinite one. For instance, for every $k \neq 0$ the word $a^kb^ka^{-k}b^{-k}$ is geodesic in $H(\mathbb{Z})$, but it cannot be extended to an infinite geodesic word, and even to a geodesic word one letter longer. In geometric group theory, such words are called dead end words, see \cite{OH}. The elements defined by dead end words are called dead end elements. More precisely, let $G = \langle S \rangle$ be a group with a generating set. An element $g \in G$ is called dead end element (with respect to $S$) if $l(gs) < l(g)$ for all $s \in S \cup S^{-1}$. A geodesic word $w \in F(S)$ is called dead end word if $ws$ is not geodesic for all $s \in S\cup S^{-1}$. For example, in $H(\mathbb{Z})$ elements of the form $[a,b]^k$ with $k \neq 0$ are dead end elements. The notion of dead end elements was introduced by Bogopolski in \cite{BO}. For example, it is known that a Coxeter group with Coxeter generators has dead ends if and only if the group is finite. An explicit description of dead end elements in known for the lamplighter group $\mathbb{Z}\slash 2\mathbb{Z} \wreath \mathbb{Z}$ (see \cite{CTLamp, CET}) and for Thompson group $F$ with a two-element generating set (see \cite{BR,CT}). Information about dead ends in solvable and metabelian groups can be found in \cite{GU,GUS}. Note that dead end elements in $H(\mathbb{Z})$ were studied in \cite{DP} in the context of the group depth notion. More precisely, it was proved that ${\rm depth}(N_2) = \infty$ and that dead ends from the sequence $g_n = [a,b]^{n\cdot n+1}$, where $n>2,$ have depth at least $\sqrt{2n-4}+1$. 
\begin{figure}[H]
\centering
\includegraphics[width = 12cm]{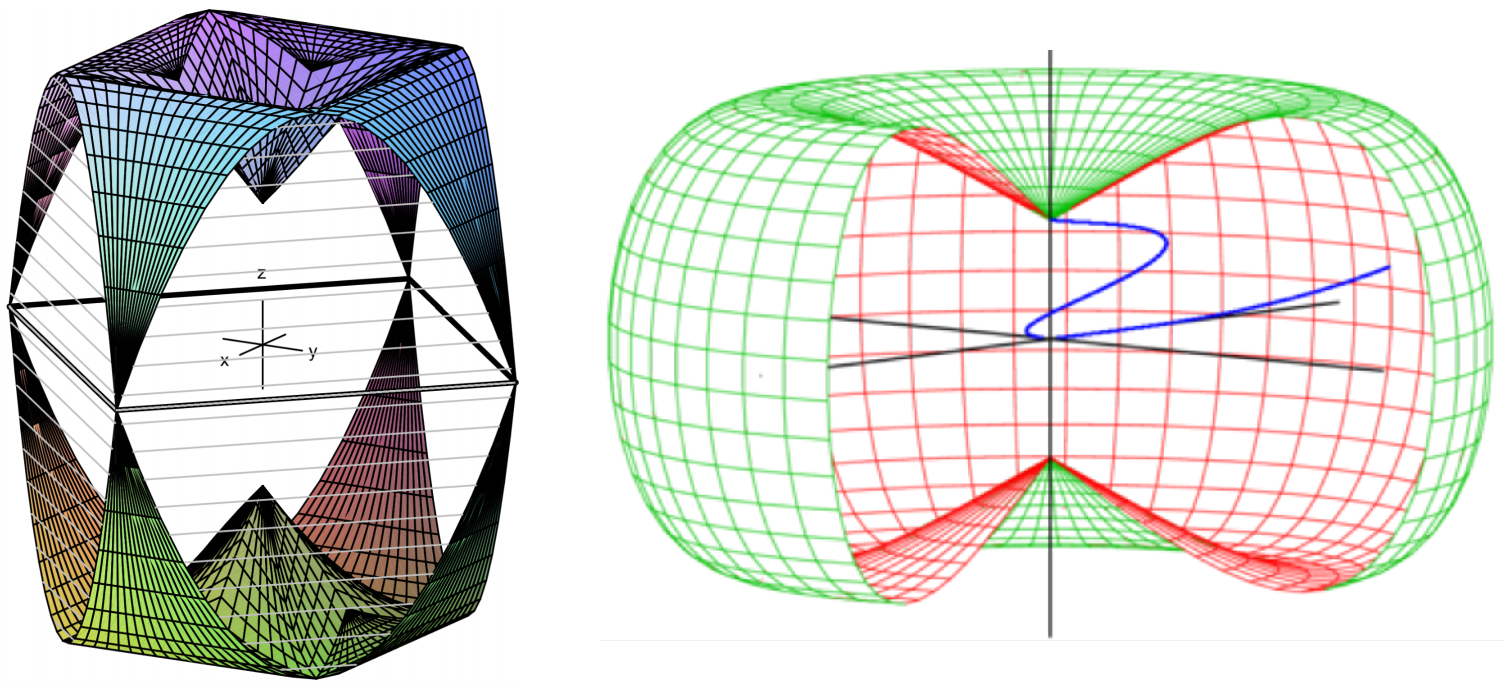}
\caption{Visualization of spheres in $H(\mathbb{Z})$ and $H(\mathbb{R})$. The pictures were taken from \cite{Breulliard,Notes}.}\label{P1}
\end{figure}
The geometry of the discrete Heisenberg group $H(\mathbb{Z})$ is closely related to the geometry of the Heisenberg group $H(\mathbb{R})$, see Figure \ref{P1}. There is a remarkable fact (see \cite{Notes}) that ${\rm Cay}(H(\mathbb{Z}),\{a,b\})$ is quasi-isometric to $H(\mathbb{R})$ and any geodesic curve in $H(\mathbb{R})$ is close to some geodesic in $H(\mathbb{Z})$.

\subsection{Main results}

We prove the following curious fact.

\begin{Proposition}\label{Pr1}
In the discrete Heisenberg group $H(\mathbb{Z})$, any geodesic word is a prefix of a dead end word.
\end{Proposition}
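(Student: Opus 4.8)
The plan is to reduce the statement to a combinatorial assertion about lattice paths and minimal perimeter polyominoes, and then to settle that assertion by an explicit ``wrapping'' construction.

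First I would record two reductions. Identify a word in $a,b$ with the lattice path in $\mathbb{Z}^2$ that starts at the origin and reads $a,a^{-1},b,b^{-1}$ as the unit steps $\pm e_1,\pm e_2$; then $[w]=(n,m,k)$, where $(n,m)$ is the endpoint of the path and $k$ is its algebraic area $\oint x\,dy$. Since every nontrivial element of the commutator subgroup is a dead end element, any geodesic word $u$ with $[u]\in[H(\mathbb{Z}),H(\mathbb{Z})]\setminus\{1\}$ is automatically a dead end word: if $us$ were geodesic for some $s\in\{a^{\pm1},b^{\pm1}\}$, then $|us|=l([u])+1$, contradicting $l([u]s)\le l([u])-1$. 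The empty word is a prefix of the dead end word $aba^{-1}b^{-1}$, and for a nonempty geodesic word $w$ every geodesic extension whose path is a closed loop represents a nontrivial element of the commutator subgroup (a geodesic word representing $1$ is empty). Hence it suffices to prove that \emph{every nonempty geodesic word is a prefix of some geodesic word whose path is a closed loop}. By the description of dead end words in terms of minimal perimeter polyominoes — together with the fact that a shortest closed lattice path with prescribed nonzero algebraic area $K$ is a simple traversal of the boundary of a minimal perimeter polyomino with $|K|$ cells — such a geodesic loop is precisely a traversal, based at the origin, of the boundary of a minimal perimeter polyomino that has the origin on its boundary. So the statement becomes: \emph{the path of every geodesic word is an initial arc, based at the origin, of the boundary walk of some minimal perimeter polyomino}.

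To prove this, I would normalize $w$ by the Cayley graph automorphisms $a\leftrightarrow b$, $a\mapsto a^{-1}$, $b\mapsto b^{-1}$, which fix the identity and hence preserve geodesics, prefixes and dead end words, and then invoke the structural description of geodesic words established earlier, which constrains the shape of a geodesic path up to these symmetries. The construction is then: given the geodesic path $P$ of $w$ from the origin to its endpoint $Q$, choose a large parameter $M$ and build a minimal perimeter polyomino $R_M$ — essentially an $M\times M$ (or $M\times(M+1)$) rectangle with a small corner region appended to, or carved out of, it, the correction being dictated by the shape of $P$ — so that $P$ is an initial arc of the boundary walk of $R_M$ based at the origin; the completion $v$ is the remaining arc of $\partial R_M$. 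For instance $P=a\,b^{\,t}$ completes to the boundary of an $M\times t$ rectangle whose right edge carries the $b^{\,t}$-segment (for $t=M$ this is $a\,b^{\,M}\,a^{-M}\,b^{-M}\,a^{\,M-1}$), and $P=b^{\,s}\,a\,b^{\,s}$ completes to the boundary of the $2s\times 2s$ square with a $1\times s$ corner notch removed, i.e.\ $b^{\,s}\,a\,b^{\,s}\,a^{\,2s-1}\,b^{-2s}\,a^{-2s}$, whose algebraic area is $4s^2-s$ and whose length $8s$ equals $\mathrm{minper}(4s^2-s)=2\lceil 2\sqrt{4s^2-s}\,\rceil$. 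In general one checks $|wv|=\mathrm{perimeter}(R_M)=l(c^{\pm|R_M|})=l([wv])$ using $l(c^K)=\mathrm{minper}(|K|)$ and $\mathrm{minper}(n)=2\lceil 2\sqrt n\,\rceil$; the freedom in $M$, and in choosing the corrected corner among near-square minimal perimeter polyominoes with a prescribed flat side, is what allows this equality to be attained.

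The main obstacle, I expect, is carrying out this last step \emph{uniformly}: proving that the path of an \emph{arbitrary} geodesic word — which a priori could wind around, re-cross itself, or bulge in a way that looks wasteful — can always be completed to the boundary walk of a \emph{minimal} perimeter polyomino. This is exactly where the earlier classification of geodesics is indispensable, since it excludes genuinely bad shapes (for example a word whose path runs all the way around a square and then protrudes fails to be geodesic, the protrusion being reabsorbed as a detour along one side of the square). The delicate points are: handling the transition between the ``$P$-part'' and the ``completion-part'' of $\partial R_M$ so that no shortcut is introduced; ensuring the resulting cell count lands in a range where $R_M$ is genuinely of minimal perimeter; and organizing the case analysis over the normalized shapes of geodesic words.
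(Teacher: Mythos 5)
Your reduction is sound and lands exactly on the paper's target: it suffices to show that the lattice path of every geodesic word is an initial arc, based at the origin, of the boundary walk of some minimal perimeter polyomino, since such boundary words are dead end words (Lemma \ref{L0}, Proposition \ref{Pr2}). But the proof of that assertion --- which you yourself flag as ``the main obstacle'' --- is not actually carried out: you verify it for two sample shapes and otherwise defer to ``the structural description of geodesic words established earlier.'' No such description of \emph{finite} geodesic words is available at this point. The classification of finite geodesics is Theorem \ref{Th2}, the paper's main result, and its proof \emph{uses} the present proposition, so invoking it here would be circular; the length formula by itself constrains lengths, not shapes, and upgrading it to a uniform ``wrapping'' construction for an arbitrary geodesic path is essentially the entire difficulty of the statement.

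The paper closes this gap with two ingredients you do not use. First, a K\"onig-type dichotomy: every geodesic word is either a prefix of a dead end word (nothing to prove) or a prefix of a right-infinite geodesic word. Second, the Vershik--Malyutin classification of \emph{infinite} geodesic words (Lemma \ref{VMLEMMA}), which is a prior, external result independent of everything in this paper: an infinite geodesic is either homogeneous or of the restricted form $Vxy^mx^{-1}y^{+\infty}$ (or its variant $x^ny^{-1}x^myVy^{+\infty}$), with $V$ within $m$ swaps of $x^iy^j$. Finite prefixes of such words have completely explicit shapes (monotone staircases, possibly with one hook), and for these the completion to the boundary of a large near-square minimal perimeter polyomino is immediate --- exactly the construction you sketch for $ab^{\,t}$ and $b^{\,s}ab^{\,s}$. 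If you replace the appeal to a general classification of finite geodesic words by this dichotomy together with Lemma \ref{VMLEMMA}, your argument becomes the paper's proof; as written, the central step is missing.
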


Note that any geodesic word is either prefix of an infinite geodesic word or prefix of a dead end word. We prove that in $H(\mathbb{Z})$ any finite prefix of an infinite geodesic word is a prefix of some dead end word. It is easy to see that the set of dead end words is equal to the set of all geodesic representatives of dead end elements. Here is our first main result.

\begin{Theorem}\label{Th1}
An element $g \in H(\mathbb{Z})$ is a dead end element if and only if $g = [a,b]^k$ for some $k \neq 0$.
\end{Theorem}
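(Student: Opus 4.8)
The plan is to pass to the standard lattice-path model. Identify $H(\mathbb Z)$ with $\mathbb Z^{3}$ by sending $\begin{psmallmatrix}1 & n & c \\ 0 & 1 & m \\ 0 & 0 & 1\end{psmallmatrix}$ to $(n,m,c)$, so that $a=(1,0,0)$, $b=(0,1,0)$ and $[a,b]^{k}=(0,0,k)$ generates the centre $[H(\mathbb Z),H(\mathbb Z)]$. A word $w$ in $a^{\pm1},b^{\pm1}$ is read as a lattice path $\gamma_{w}$ in $\mathbb Z^{2}$ from the origin ($a^{\pm1}$ unit horizontal steps, $b^{\pm1}$ unit vertical steps), and a direct matrix computation shows that the $\mathbb Z^{2}$-part of $[w]$ is the endpoint of $\gamma_{w}$ while its central coordinate is the signed area $\mathcal A(\gamma_{w})$ swept against the $y$-axis (the discrete $\int x\,dy$ along $\gamma_{w}$, i.e.\ the sum of $\pm x$ over the vertical steps). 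Hence
\[
 l\big((n,m,c)\big)=\min\{\,|\gamma| : \gamma \text{ a lattice path from }(0,0)\text{ to }(n,m),\ \mathcal A(\gamma)=c\,\},
\]
and in particular $l\big((0,0,k)\big)$ is the minimal length of a closed lattice loop of signed area $k$; by the rectilinear isoperimetric inequality and the minimal perimeter polyomino facts developed earlier in the paper this equals $\operatorname{per}(|k|):=2\lceil 2\sqrt{|k|}\,\rceil$. I will also use the parity homomorphism $H(\mathbb Z)\to\mathbb Z/2$, $a,b\mapsto 1$ (the defining relations do hold in $\mathbb Z/2$), which forces $l(gs)=l(g)\pm 1$ for all $s\in\{a^{\pm1},b^{\pm1}\}$, and the Cayley-graph isometries induced by the automorphisms $a\mapsto a^{-1}$, $b\mapsto b^{-1}$, $a\leftrightarrow b$ and by inversion, which let me transfer a statement about one neighbour of $g$ to any other.

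\emph{The ``if'' direction.} Fix $k>0$ (the case $k<0$ follows by inversion). Let $R$ be a polyomino with $k$ cells whose perimeter attains the minimum $\operatorname{per}(k)$. Placing $R$ with a convex corner at the origin so that $R$ lies in a chosen closed quadrant, the two boundary edges of $R$ at the origin lie on the coordinate axes. Reading $\partial R$ counterclockwise from the origin gives a lattice loop of length $\operatorname{per}(k)$ and signed area $k$; deleting its last edge (one of the two axis edges, which contributes $0$ to $\mathcal A$) leaves a lattice path from the origin to an adjacent lattice point on that axis, of length $\operatorname{per}(k)-1$ and signed area $k$. Running over the four quadrant placements yields such a path to each of $(\pm1,0)$ and $(0,\pm1)$, whence $l\big((0,0,k)\cdot s\big)\le\operatorname{per}(k)-1=l\big((0,0,k)\big)-1$ for every generator $s$; by parity these are equalities, so $(0,0,k)=[a,b]^{k}$ is a dead end element.

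\emph{The ``only if'' direction.} First reformulate: appending the inverse final letter to a geodesic for $gs$ gives a word of length $l(gs)+1$ for $g$, so $l(g)\le l(gs)+1$; hence $g$ is a dead end iff $l(gs)=l(g)-1$ for all four $s$, and — removing or appending a final letter — equivalently iff $g$ has a geodesic representative ending in each of $a,a^{-1},b,b^{-1}$, that is, iff $l(n\pm1,m,c)=l(n,m\pm1,c)=l(g)-1$. Now suppose $g=(n,m,c)$ with $(n,m)\ne(0,0)$; by the symmetries above we may assume $n\ge m\ge 0$ and $n\ge1$, and we suppose for contradiction that all four equalities hold. I would contradict one of them by a case analysis on the position of $c$, using the explicit length formula — equivalently, the minimal perimeter polyomino description of geodesics — established earlier in the paper. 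If $0\le c\le nm$, then a monotone staircase $(0,0)\to(n,m)$ of signed area $c$ exists, so $l(g)=n+m$; but then $c\le nm\le(n+1)m$, so likewise $l(n+1,m,c)=n+m+1$, contradicting $l(n+1,m,c)=l(g)-1$. If $c$ is outside this range, a geodesic for $g$ ``wraps'' around an almost-square region determined by the excess of $|c|$ over the staircase budget, and the obstruction to one of the four prolongations is isoperimetric: moving the endpoint the ``wrong'' way enlarges the bounding box, forcing that neighbour to have length $l(g)+1$, again a contradiction.

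\emph{The main obstacle} is precisely this last case analysis. Which of the four neighbours is the long one genuinely depends on the configuration — in examples it is sometimes the neighbour toward the origin (e.g.\ for $(1,0,4)$) and sometimes the one away from it (e.g.\ for $(2,0,1)$ or $(2,2,1)$) — so a uniform argument seems out of reach, and one must carry out the minimal perimeter polyomino bookkeeping: exactly how a geodesic subpath embeds in such a boundary, and how endpoint displacement and the area $\mathcal A$ trade off against length. I expect the tightest sub-cases to be those in which $|c|$ lies just at a jump of $\operatorname{per}$ (just past an almost-square threshold) — the very configurations responsible for the non-regularity of the language of geodesics — where no formal shortcut is available and one argues directly from the minimal perimeter polyomino structure.
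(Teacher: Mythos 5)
Your ``if'' direction is sound and is essentially the paper's Lemma \ref{L1}: placing a minimal perimeter polyomino in each of the four closed quadrants with a convex corner at the origin amounts to taking the four relevant cyclic shifts of its boundary word, and together with the parity observation it gives $l([a,b]^k s)=l([a,b]^k)-1$ for every generator $s$. Likewise, your reformulation of dead-endness via geodesic representatives ending in each letter, and the staircase argument disposing of the case $0\le c\le nm$, match the paper (case 2(a) of its proof, which exhibits the explicit positive geodesic $a^qb^{m-r}ab^ra^{n-q-1}$).

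The genuine gap is the one you flag yourself: the case $c>nm$ (after normalizing $n\ge m\ge 0$, $n\ge 1$, $c\ge 0$) is only described impressionistically, and that is where the entire content of the ``only if'' direction lives. The paper closes it by a short, fully explicit computation with the length formula of \cite{SK} quoted in Section 2, and the case analysis you feared was ``out of reach'' collapses into exactly two regimes, each with a single uniform witness neighbour. If $n\le\sqrt{k}$, then $l(ga^{-1})=2\lceil 2\sqrt{k}\rceil-(n-1)-m=l(g)+1$ directly from the formula (your $(1,0,4)$ example). If $n>\sqrt{k}$ and $nm<k$, the witness is $gb$, whose coordinates are $(n,m+1)$ and whose central coordinate is $k+n$; note that your reformulation writes $l(n,m\pm 1,c)$, silently dropping the shift of the central coordinate by $\pm n$ under a $b$-move, and it is precisely this shift that must be tracked here. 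In that regime one checks $l(gb)=l(g)+1$ either by direct substitution into the formula (when $n\ge\sqrt{k+n}$) or, when $n<\sqrt{k+n}$, via the elementary inequality $2\lceil 2\sqrt{k+n}\rceil-2\lceil k/n\rceil-2n-1\ge 0$, which follows from $\lceil 2\sqrt{k+n}\rceil>2n$, $\lceil k/n\rceil\le\lceil\sqrt{k}\rceil$, and $n\ge\lceil\sqrt{k}\rceil+1$. So no polyomino bookkeeping is needed beyond the closed-form length function, but without carrying out this two-regime computation (or something equivalent) your proof of the ``only if'' direction is incomplete.
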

This result implies that the set of all dead end words coincide with the set of geodesic representatives of commutators $[a,b]^k$ with $k\neq 0,$ and any geodesic word is a prefix of some such representative.
\begin{figure}[H]
\centering
\includegraphics[width = 16cm]{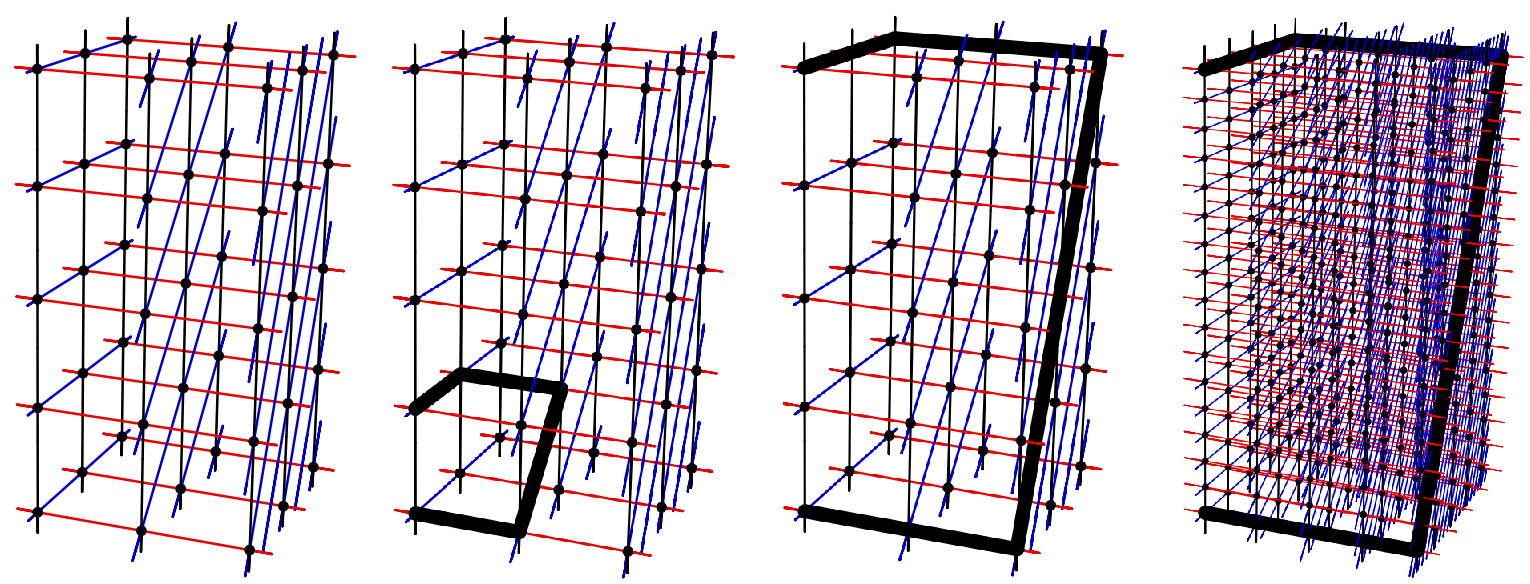}
\caption{The Cayley graph ${\rm Cay}(H(\mathbb{Z}), \{a,b, [a,b]\})$ with respect to the three-element generating set and paths defined by geodesic words in the alphabet $\{a,b\}$.
The pictures were taken from \cite{Notes}.}\label{P2}
\end{figure}
Since $H(\mathbb{Z})$ is a free nilpotent group of rank 2 and class 2, any element of the group has a unique representative of the form $[a,b]^k b^m a^n$. The pair $(n,m)$ and the number $k$ are called the coordinates and the area of $g$, respectively, see below. Note that $[a,b]^k b^m a^n\cdot a^{\pm 1} = [a,b]^k b^m a^{n\pm 1}$ and $[a,b]^k b^m a^n \cdot b = [a,b]^{k\pm n} b^{m\pm 1} a^n$. There is an embedding of the Cayley graph ${\rm Cay}(H(\mathbb{Z}), \{a,b\})$ in $H(\mathbb{R}) \cong \mathbb{R}^3$ given by $[a,b]^k b^m a^n \mapsto (n,m,k) \in \mathbb{R}^3.$ The Cayley graph is a $"$space lattice$"$ consisting of $"$rotating plane lattices$"$. Each of these plane lattices consists of parallel lines. Figure \ref{P2} shows the Cayley graph of $H(\mathbb{Z})$ with respect to the generating set $\{a,b,aba^{-1}b^{-1}\}$.

\begin{figure}[H]
\centering
\includegraphics[width = 14cm]{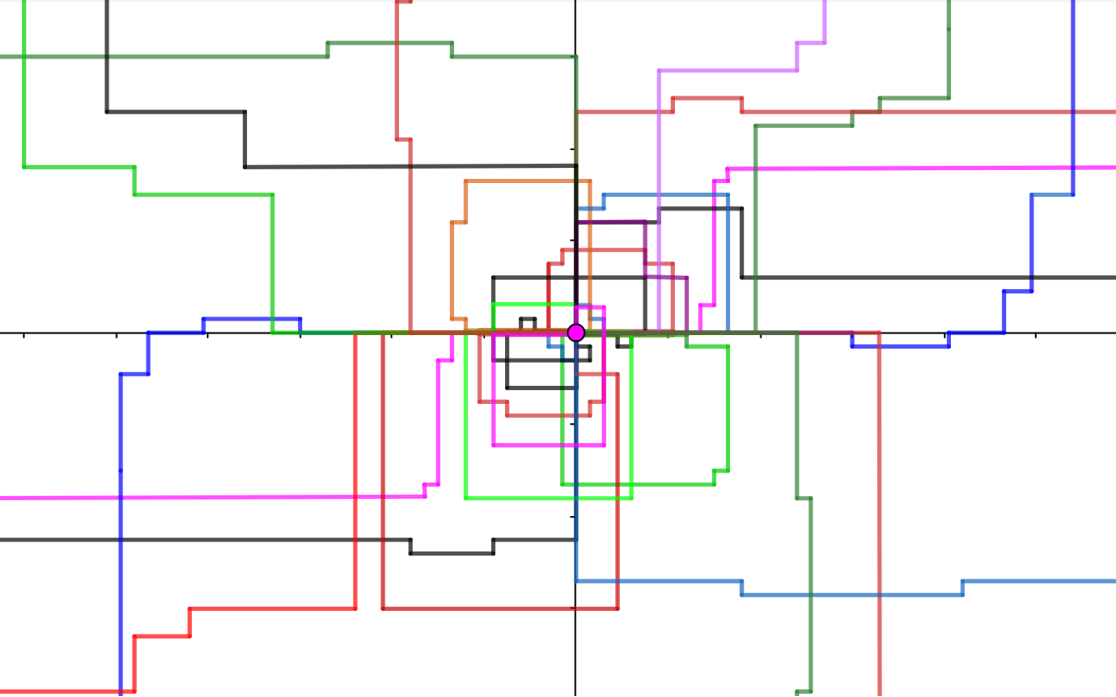}
\caption{The picture illustrates the projection of paths defined by geodesic words.}\label{Plane}
\end{figure}

Consider an orthogonal projection of the Cayley graph of $H(\mathbb{Z})$ on the plane $k=0$. This plane can be identified with the Cayley graph ${\rm Cay}(\mathbb{Z}\times\mathbb{Z}, \{a,b\})$. From an algebraic point of view, one has the following split short exact sequence with the abelianization map given by ${\rm Ab}: [a,b]^k b^m a^n \mapsto b^m a^n$
\begin{center}
\begin{tikzcd}
1\arrow{r} & \mathbb{Z}\arrow{r} & H(\mathbb{Z})\arrow{r}{{\rm Ab}} & \mathbb{Z}\times \mathbb{Z}\arrow{r} & 1
\end{tikzcd}
\end{center}
The is a correspondence between words in the alphabet $\{a,b,a^{-1},b^{-1}\}$ and a paths in ${\rm Cay}(H(\mathbb{Z}), \{a,b\})$ starting at $(0,0,0)$. The result of the projection of a path on the plane is a path, which is defined by the same word in ${\rm Cay}(\mathbb{Z}\times\mathbb{Z}, \{a,b\})$. Given a vertex corresponding to an element $g = [a,b]^k b^m a^n$, its image under the projection is a point $(n,m) \in \mathbb{Z}\times \mathbb{Z}$.

The representation above is said to be the geometric model of $H(\mathbb{Z})$. For details see \cite{VM}. An image of geodesic words in $H(\mathbb{Z})$ under the projection are oriented polygonal chains on the plane, see Figure \ref{Plane}.

A polyomino on the plane $\mathbb{R}^2$ is a connected union of subsets of the form $[n,n+1]\times [m,m+1]$ for $n,m \in \mathbb{Z}$. An area, a boundary, and a perimeter of such polyomino are defined in the natural way. By a polygonal chain we mean a connected union of subsets of the form $\{n\}\times [m,m+1]$, $[n,n+1]\times \{m\}$. A polygonal chain is said to be simple if it is homeomorphic to either the closed interval $[0,1]$ or the circle $S^1$. In the latter case, the simple polygonal chain is said to be closed. For example, the boundary of a polyomino is a union of simple closed polygonal chains. Given an area $k \in \mathbb{N},$ we say that a polyomino with area $k$ is a minimal perimeter polyomino if it has the lowest possible perimeter among all polyomino with area $k$. Any minimal perimeter polyomino with area $k$ is contractible and has perimeter $p(k) := 2 \lceil 2 \sqrt{k} \rceil$, see \cite{K}. Figure \ref{P3} shows a complete list of polyominoes with minimum perimeter $p(k)$ for $k \leq 11$.

\begin{figure}[H]
\centering
\includegraphics[width = 10cm]{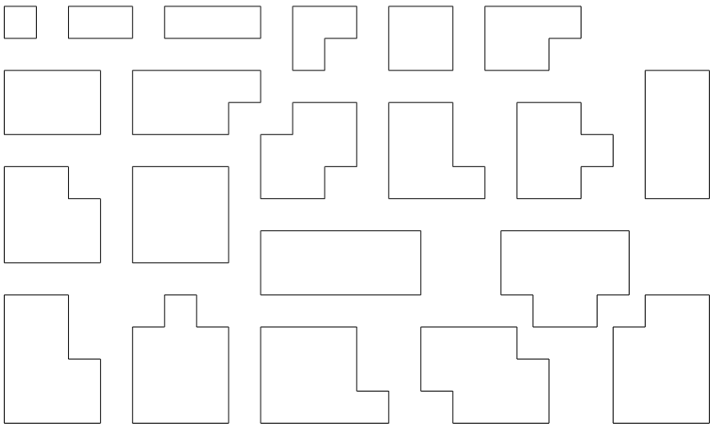}
\caption{Minimal perimeter polyominoes with area $k \leq 11$ up to translations, rotations, and reflections. The picture was taken from \cite{SK}.}\label{P3}
\end{figure}

The is a concept of a (combinatorial) Young tableau that is described e.g. in \cite{YoungTableau}. A polyomino is said to be a Young tableau if is represent a combinatorial Young tableau geometrically. 

Given $k\in \mathbb{N}$ and a rectangular polyomino $a\times b$ with $ab \geq k$ and $2(a+b) = p(k)$, one can delete four Young tableaux of total area $ab-k \geq 0$ from the polyomino's corners and obtain a minimal perimeter polyomino. This process is called deleting squares of degree 2, see Figure \ref{P4}. It turned out that any minimal perimeter polyomino can be obtained in such a way.

\begin{Lemma}[Lemma 2 in \cite{K}]
Each polyomino with minimum perimeter $p(k)$ can be obtained by deleting squares, of degree 2, of a rectangular polyomino with perimeter $p(k)$ consisting of at least $k$ squares.
\end{Lemma}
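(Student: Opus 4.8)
We sketch the argument. The plan is to show that the \emph{bounding box} of a minimal perimeter polyomino is already a rectangle of the required kind, and that the polyomino is recovered from it by the corner-deletion process.

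First I would analyze the bounding box. Let $P$ be a polyomino of area $k$ and perimeter $p(k)$, and let $R$ be its bounding box, of dimensions $a\times b$. Counting boundary edges column by column and row by row gives $\mathrm{perim}(P)=2\bigl(\sum_{i=1}^{a}c_i+\sum_{j=1}^{b}r_j\bigr)$, where $c_i$ (resp.\ $r_j$) is the number of maximal runs of cells of $P$ in the $i$-th column (resp.\ $j$-th row). Since $P$ is connected and $R$ is tight, every row and every column of $R$ meets $P$, so $c_i\ge 1$, $r_j\ge 1$, and $\mathrm{perim}(P)\ge 2(a+b)$. On the other hand $P\subseteq R$ forces $ab\ge k$, so $a+b\ge 2\sqrt{ab}\ge 2\sqrt{k}$, and integrality gives $a+b\ge\lceil 2\sqrt{k}\rceil$, i.e.\ $2(a+b)\ge p(k)$. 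Comparing the two inequalities yields $2(a+b)=p(k)=\mathrm{perim}(P)$: thus $R$ is a rectangle of perimeter $p(k)$ with $ab\ge k$ cells, which is exactly the rectangle whose corners we will trim. Moreover equality in $\mathrm{perim}(P)\ge 2(a+b)$ forces $c_i=r_j=1$ for all $i,j$, so $P$ is HV-convex: each of its rows is an interval $[l_j,r_j]$ of columns and each of its columns is an interval of rows.

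The remaining, and main, step is to show that an HV-convex polyomino with tight bounding box $R$ equals $R$ with four pairwise disjoint Young tableaux deleted from its corners. Here I would prove (by a short case analysis using column-convexity to rule out an interior strict local maximum) that the left profile $j\mapsto l_j$ is weakly decreasing then weakly increasing and, symmetrically, that $j\mapsto r_j$ is weakly increasing then weakly decreasing. Then the cells of $R\setminus P$ lying to the left of $P$ form a staircase in one left corner (the rows on the decreasing branch of $l_j$) together with a staircase in the other left corner (the increasing branch), and likewise on the right; each of these four staircases is a Young tableau. Disjointness follows because $\min_j l_j=1$ and $\max_j l_j$ is attained on a row $j_0$ with $l_{j_0}\le r_{j_0}$ (every row of $P$ is nonempty), which separates the left corner regions from each other and from the right ones; their total area is $ab-k$. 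This realizes $P$ as the result of deleting squares of degree $2$ from $R$.

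The obstacle is entirely in the last step: establishing the unimodality of the profiles $l_j$ and $r_j$ from column-convexity, and carefully checking that the four trimmed corners do not overlap — in particular handling the degenerate cases $a=1$, $b=1$, or $P=R$, where some or all of the corner tableaux are empty.
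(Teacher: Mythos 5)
This lemma is quoted by the paper from Harary--Harborth (Lemma~2 of \cite{K}) and is not proved in the text, so there is no in-paper argument to compare against; your bounding-box proof is essentially the standard one from that source. The sketch is correct: the perimeter identity $\mathrm{perim}(P)=2\bigl(\sum_i c_i+\sum_j r_j\bigr)$ together with $ab\ge k$ does force the bounding box to have perimeter $p(k)$ and forces HV-convexity, and the remaining unimodality-of-profiles step you flag goes through by the case analysis you describe (using that consecutive row-intervals of a connected HV-convex polyomino must intersect), so the four corner regions are indeed disjoint Young tableaux of total area $ab-k$.
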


\begin{figure}[H]
\centering
\includegraphics[width = 17cm]{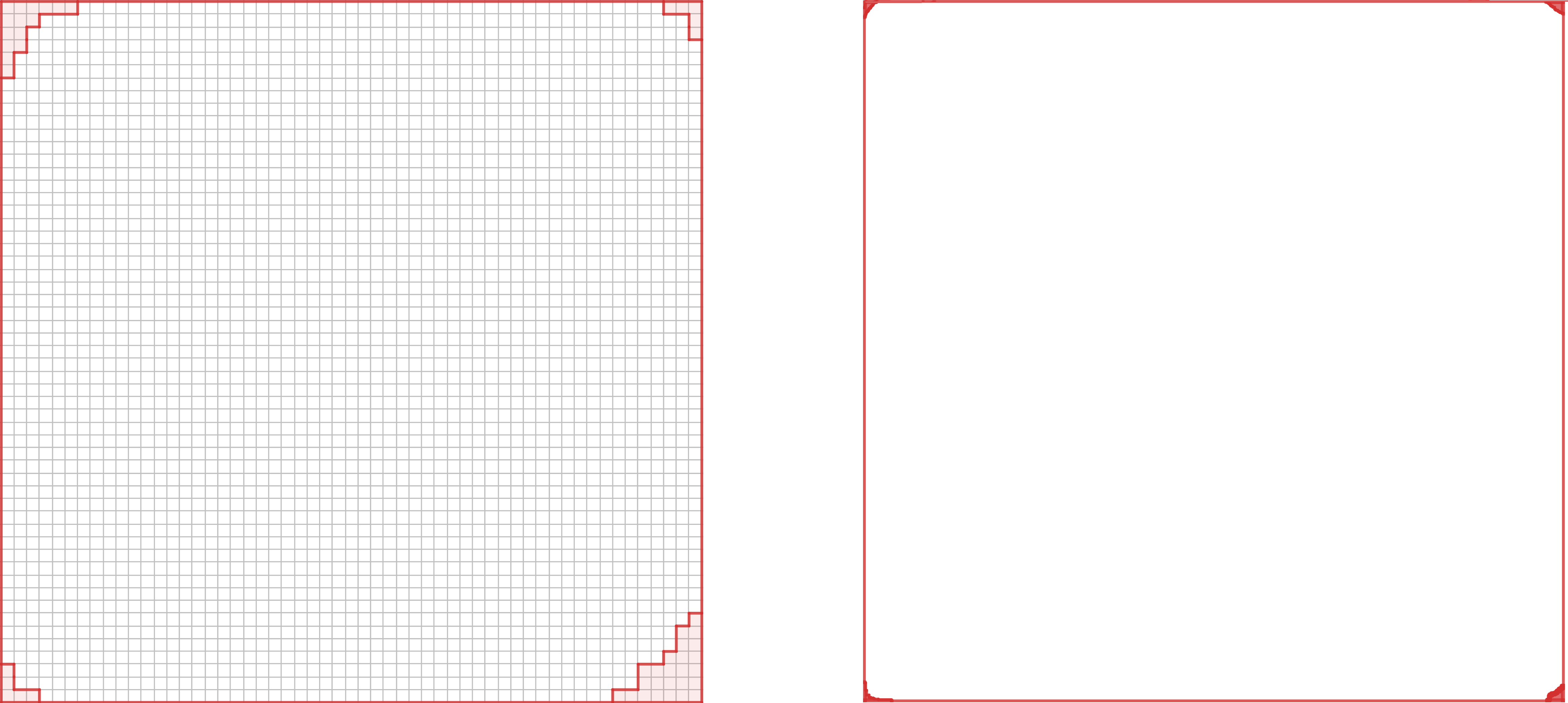}
\caption{Deleting squares of degree 2 process (on the left): the minimal perimeter polyomino is the complement of the four red Young tableaux in the rectangular. The right picture illustrates the fact the total area of the Young tableaux is essentially less than the area of the rectangular as the area goes to infinity.
}\label{P4}
\end{figure}

Given a minimal perimeter polyomino whose the boundary contains the origin, one can choose an orientation on the boundary and obtain so-called oriented polyomino. Starting from the origin, one can read a word $w$ in the alphabet $\{a,b,a^{-1},b^{-1}\}$ along the boundary. Since the boundary is a closed polygonal chain, the word $w$ defines an element of $H(\mathbb{Z})$ of the form $[a,b]^k$. It is easy to see that $|k|$ is an area of the minimal perimeter polyomino. Moreover, the orientation is positive if and only if $k\geq 0$. 
\begin{figure}[H]
\centering
\includegraphics[width = 6cm]{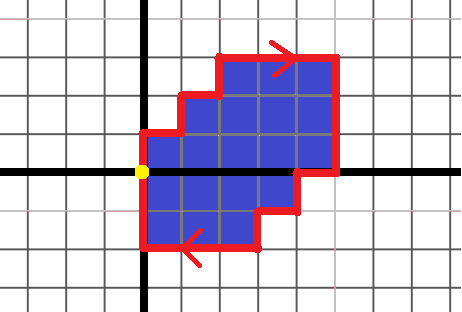}
\caption{An example of a minimal perimeter polyomino on the plane. Its oriented boundary corresponds to the word $w=bababa^3b^{-3}a^{-1}b^{-1}a^{-1}b^{-1}a^{-3}b^2$. The resulting element is $[w] = [a,b]^{-19}$.}\label{PolyDead}
\end{figure}

\begin{Lemma}\label{L0}
The word $w$ defined above is a dead end word.
\end{Lemma}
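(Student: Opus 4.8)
The plan is to check directly that $w$ satisfies the two conditions defining a dead end word: $w$ is geodesic, and the concatenation $ws$ is non-geodesic for every $s\in S\cup S^{-1}=\{a^{\pm1},b^{\pm1}\}$. Throughout I write $[w]=[a,b]^k$, so $|k|$ is the area of the polyomino.

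\emph{Step 1: $w$ is geodesic.} The string length $|w|$ equals the perimeter of the polyomino, which is $p(|k|)=2\lceil 2\sqrt{|k|}\rceil$ since the polyomino has minimum perimeter for its area. As $w$ itself represents $[a,b]^k$ this gives $l([a,b]^k)\le p(|k|)$, and it remains to prove $l([a,b]^k)\ge p(|k|)$. Let $u$ be any word with $[u]=[a,b]^k$. Applying ${\rm Ab}$, the path of $u$ is a closed lattice loop $\gamma$ (so $|u|$ is even), and iterating the rule $[a,b]^{j}b^{m}a^{n}\cdot b^{\pm1}=[a,b]^{j\pm n}b^{m\pm1}a^{n}$ shows that $k$ equals the discrete line integral $\oint_{\gamma}x\,dy$, i.e.\ the signed area enclosed by $\gamma$ counted with winding multiplicity. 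The lattice isoperimetric inequality then gives $|k|\le\lfloor|u|^{2}/16\rfloor$, and since $|u|$ is even this forces $|u|\ge 2\lceil 2\sqrt{|k|}\rceil=p(|k|)$. Hence $l([a,b]^k)=p(|k|)=|w|$.

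\emph{Step 2: $ws$ is never geodesic.} The element $[a,b]^k$ is central in $H(\mathbb{Z})$, so every cyclic rotation of $w$ again represents $[a,b]^k$. Since $k\neq 0$ the polyomino is two-dimensional, so its oriented boundary runs in all four coordinate directions; hence each of $a,b,a^{-1},b^{-1}$ is the first letter of some cyclic rotation $w'=s\cdot u$ of $w$. Here $|u|=p(|k|)-1$ and $[u]=s^{-1}[a,b]^k=[a,b]^k s^{-1}$, so $l([a,b]^k s^{-1})\le p(|k|)-1$. As $s$ ranges over the four generators we obtain $l([a,b]^k t)\le p(|k|)-1$ for every $t\in S\cup S^{-1}$, and therefore
\[
l([wt])=l([a,b]^k t)\le p(|k|)-1<p(|k|)+1=|wt|,
\]
so $wt$ is not geodesic. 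Combining Steps 1 and 2, $w$ is a dead end word; in fact $l([a,b]^k t)<l([a,b]^k)$ for all $t$, so $[a,b]^k$ is even a dead end element, which feeds into Theorem~\ref{Th1}. (Alternatively, the equality of all four numbers $l([a,b]^k t)$ follows from the automorphism of $H(\mathbb{Z})$ sending $a\mapsto b$, $b\mapsto a^{-1}$, which fixes $[a,b]^k$ and permutes $S\cup S^{-1}$ cyclically.)

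The only step needing real care is the lower bound $l([a,b]^k)\ge p(|k|)$. The estimate $|\oint_{\gamma}x\,dy|\le\lfloor\ell^{2}/16\rfloor$ for a closed lattice loop of length $\ell$ is \emph{not} immediate from the bounding box of $\gamma$, since $\gamma$ may have large winding number; instead one should apply the cited bound of \cite{K} to the super- and sub-level sets of the winding-number function of $\gamma$ and sum the contributions by a layer-cake decomposition, using $\sum a_i^2\le(\sum a_i)^2$ for nonnegative $a_i$. (If one is content to quote the explicit length formula for $H(\mathbb{Z})$, e.g.\ from \cite{SB}, this step is automatic and the whole proof is short.) Everything else is elementary.
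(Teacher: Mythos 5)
Your proof is correct, and it takes a more self-contained route than the paper. The paper's own proof of Lemma \ref{L0} is a one-line table lookup: plugging coordinates $(0,0)$ and area $k$ into the explicit length formula from \cite{SK} quoted in Section 2 gives $l([a,b]^k)=2\lceil 2\sqrt{k}\rceil=p(k)=|w|$, and plugging in the coordinates of the four neighbours gives $l([a,b]^k s)=p(k)-1$ for every $s\in S\cup S^{-1}$, which is exactly your two steps with the formula doing all the work. Your Step 2 replaces the formula by the centrality/cyclic-shift argument (every letter of $S\cup S^{-1}$ occurs in the boundary word of a polyomino of positive area, so every $[a,b]^k t$ has a representative of length $p(k)-1$); this is in fact the very device the authors use in their proof of Lemma \ref{L1}, so here you are only reorganizing the paper's material. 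The genuinely different ingredient is your Step 1 lower bound $l([a,b]^k)\ge p(|k|)$ via the discrete isoperimetric inequality: you correctly observe that the area picked up by a word equals the signed area of its abelianized loop counted with winding multiplicity, correctly flag that the naive bounding-box estimate fails for loops with nontrivial winding, and the repair you sketch (layer-cake over the level sets of the winding number, each edge of the loop lying on the boundary of exactly one relevant level set, plus $\sum p_i^2\le(\sum p_i)^2$) is sound. The trade-off is clear: the paper's argument is immediate but leans entirely on the external length formula of \cite{SB,SK}, while yours is longer but independent of it, and as a bonus it directly establishes that $[a,b]^k$ is a dead end \emph{element}, anticipating Theorem \ref{Th1}.
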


It turned out that any dead end word can be obtained in such a way:

\begin{Proposition}\label{Pr2}
Given $k \in \mathbb{Z},$ the map described above is a bijection between the set of all geodesic representatives of $[a,b]^k$ and the set of all oriented minimal perimeter polyominoes with area $|k|$ whose the boundary contains the origin.
\end{Proposition}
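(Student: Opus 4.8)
The plan is to verify that the map is well defined, injective, and surjective, the last point carrying essentially all of the work. For well-definedness, fix $k$ and let $P$ be an oriented minimal perimeter polyomino of area $|k|$ whose boundary passes through the origin, with the orientation chosen so that the boundary word $w_P$, read off from the origin, represents $[a,b]^k$; this is possible since the two orientations produce $[a,b]^{\pm|k|}$ and the sign is governed by the orientation, exactly as explained before the statement. By Lemma \ref{L0}, $w_P$ is a dead end word, hence geodesic, hence a geodesic representative of $[a,b]^k$, so the map indeed lands in the claimed set. For injectivity, I would use that a word starting at the origin determines its lattice path in $\mathbb{Z}^2$, and that for $w_P$ this path is exactly the oriented boundary of $P$; since $\partial P$ is a simple closed polygonal chain (as $P$ is a minimal perimeter polyomino), $P$ is recovered from it as the bounded complementary region and the orientation is recovered from the orientation of the curve, so distinct $P$ give distinct $w_P$.

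For surjectivity, the first step is a discrete isoperimetric estimate for an arbitrary word $w$ with $[w]=[a,b]^k$. Its lattice path $\gamma$ is a closed loop at the origin, because ${\rm Ab}([a,b]^k)=1$, and by the area relations of the geometric model ($[a,b]^k b^m a^n\cdot b=[a,b]^{k\pm n}b^{m\pm1}a^n$) the signed area enclosed by $\gamma$ equals the area coordinate of $[w]$, namely $k$. Let $f\colon\mathbb{Z}^2\to\mathbb{Z}$ be the winding number of $\gamma$, a finitely supported function on lattice cells; then $k=\sum_c f(c)$, and $|w|\ge\|df\|_1:=\sum_e\bigl|f(c_1(e))-f(c_2(e))\bigr|$, summed over lattice edges $e$ with adjacent cells $c_1(e),c_2(e)$, because $\gamma$ crosses each edge at least as many times as the jump of $f$ across it. Passing to the nested super-level sets $U_j=\{f\ge j\}$, $V_j=\{f\le -j\}$ ($j\ge1$), the coarea identity gives $\|df\|_1=\sum_{j\ge1}\bigl(|\partial U_j|+|\partial V_j|\bigr)$ and $k=\sum_{j\ge1}\bigl(|U_j|-|V_j|\bigr)$, so $|k|\le\sum_{j\ge1}\bigl(|U_j|+|V_j|\bigr)$. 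Feeding in the polyomino isoperimetric inequality $|\partial S|\ge p(|S|)=2\lceil2\sqrt{|S|}\,\rceil$ (Lemma 2 in \cite{K}, extended to arbitrary finite cell sets since disconnectedness and holes can only raise the perimeter for a given area) together with iterated strict superadditivity $p(m_1)+p(m_2)>p(m_1+m_2)$ for $m_1,m_2\ge1$, this yields $|w|\ge p(|k|)$.

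The second step is the equality analysis. Taking $w$ to be the boundary word of a translate of a minimal perimeter polyomino through the origin gives $l([a,b]^k)\le p(|k|)$, so by the first step $l([a,b]^k)=p(|k|)$ and the geodesic representatives of $[a,b]^k$ are precisely its representatives of length $p(|k|)$. For such a representative $w$, every inequality above becomes an equality: the strict superadditivity forces exactly one of the nested sets $U_j,V_j$ to be nonempty, hence it is $U_1$ or $V_1$, which I denote $P$; equality in the isoperimetric inequality makes $P$ a minimal perimeter polyomino of area $|k|$ and shows that $f$ is $\pm1$ on the cells of $P$ and $0$ elsewhere; and equality in $|w|\ge\|df\|_1$ forces $\gamma$ to traverse each edge of $\partial P$ exactly once and no other edge. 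Since $\partial P$ is a simple closed polygonal chain, $\gamma$ is then its cyclic traversal starting at the origin (so the origin lies on $\partial P$) and oriented according to the sign of $k$; that is, $w=w_P$. Hence every geodesic representative of $[a,b]^k$ lies in the image, and together with injectivity this proves that the map is a bijection. (When $k=0$ both sides are singletons and the statement is trivial.)

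I expect the main obstacle to be precisely this equality case: ruling out that a geodesic word for $[a,b]^k$ buys extra enclosed area through holes, several disjoint winding regions, higher winding numbers, or backtracking. The coarea decomposition reduces the matter to the classical polyomino isoperimetric bound plus strict superadditivity of $p$, but the ceiling in $p(k)=2\lceil2\sqrt k\,\rceil$ must be handled with care to keep the superadditivity strict — one checks that the gap $p(m_1)+p(m_2)-p(m_1+m_2)$ is at least $2$ for all $m_1,m_2\ge1$ — and I expect most of the technical effort to concentrate there.
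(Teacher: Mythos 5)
Your proof is correct, but your surjectivity argument takes a genuinely different route from the paper's. The paper argues as follows: by Lemma \ref{SIMPLE} (which itself rests on Lemma \ref{L1}, Lemma \ref{L0}, and ultimately the explicit length formula quoted from \cite{SK}), a geodesic representative $w$ of $[a,b]^k$ traces a \emph{simple} closed polygonal chain, hence is the boundary of a polyomino $P_1$ of area $|k|$ and perimeter $|w|$; comparing $|w|$ with the boundary word $u$ of a minimal perimeter polyomino of the same area ($|u|\le|w|$ by minimality of perimeter, $|w|\le|u|$ since $w$ is geodesic) forces $P_1$ to be minimal. You instead establish the isoperimetric lower bound $|w|\ge p(|k|)$ for an \emph{arbitrary} word representing $[a,b]^k$, via winding numbers, the coarea decomposition into level sets, the polyomino isoperimetric inequality, and strict superadditivity of $p$, and then read off simplicity of the path, minimality of $P$, and $w=w_P$ from the equality analysis. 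Your version is more self-contained: it does not invoke Lemma \ref{SIMPLE} or the length formula, and in fact re-derives $l([a,b]^k)=p(|k|)$ and the dead-end machinery's key input along the way, while directly excluding backtracking, holes, several winding components, and higher winding numbers. The price is the extra technical work you correctly flag: extending the isoperimetric inequality to disconnected sets and sets with holes (which follows from monotonicity of $p$ and superadditivity), and verifying $p(m_1)+p(m_2)-p(m_1+m_2)\ge 2$ for all $m_1,m_2\ge 1$ (true, with the tight cases occurring at $m_2=1$ and at pairs of perfect squares). Both arguments are sound; the paper's is shorter given its lemmas, yours is the more robust and independent derivation.
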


The proposition implies an explicit grammatical description for the set of all dead end words stated below. We introduce the following notations
\begin{align*}
q_-(k) := \left\lceil \dfrac{\lceil 2 \sqrt{k} \rceil - \sqrt{\lceil 2 \sqrt{k} \rceil^2 - 4k}}{2} \right\rceil, \qquad q_+(k) := \left\lfloor \dfrac{\lceil 2 \sqrt{k} \rceil + \sqrt{\lceil 2 \sqrt{k} \rceil^2 - 4k}}{2} \right\rfloor.
\end{align*}
Consider the following subword transformations of words: $ab \mapsto ba$, $ba^{-1} \mapsto a^{-1}b$, $a^{-1}b^{-1} \mapsto b^{-1}a^{-1}$, and $b^{-1}a \mapsto ab^{-1}$. For example, one can transform $abbba^{-1} \mapsto babba^{-1} \mapsto baba^{-1}b$. These transformations are called swaps of neighboring letters. The main result of this article is the following

\begin{Theorem}\label{Th2}
Suppose $k \geq 0$ and $L \in [q_-(k), q_+(k)]$ are integers, we put $u := a^L b^{\lceil 2 \sqrt{k} \rceil} a^{-L} b^{-\lceil 2 \sqrt{k} \rceil}$. Consider a word $w$ that can be obtained from $u$ by exactly $L(2 \lceil 2 \sqrt{k} \rceil - L) - k$ subword transformations as above; consider all cyclic shifts of $w$ and their inverses. All such words are dead end words and any dead end word is of that form (for some $k,L$). Moreover, any geodesic word is a prefix of some such word.
\end{Theorem}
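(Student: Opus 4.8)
The plan is to derive Theorem~\ref{Th2} from Proposition~\ref{Pr2} together with the rectangle-deletion description (Lemma~2 of \cite{K}), by translating the geometry of minimal perimeter polyominoes into the grammar of swaps. The statement has three parts: (i) every word of the described form is a dead end word; (ii) every dead end word is of that form for suitable $k,L$; and (iii) every geodesic word is a prefix of such a word. Part (iii) will be immediate once (i) and (ii) identify the words of the described form with the dead end words, since by Proposition~\ref{Pr1} every geodesic word is a prefix of a dead end word. Before anything else I would record two symmetries: by Theorem~\ref{Th1} every dead end word represents some $[a,b]^{k}$ with $k\neq0$; inverting a word sends a geodesic representative of $[a,b]^{k}$ to one of $[a,b]^{-k}$; and a cyclic shift of a geodesic representative of the \emph{central} element $[a,b]^{k}$ is again a geodesic representative of $[a,b]^{k}$ of the same length, conjugation by the discarded prefix being trivial. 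Thus it suffices to treat $k\geq1$ up to cyclic shift, which is exactly what the clause ``consider all cyclic shifts of $w$ and their inverses'' absorbs.

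For (i), note that $u$ is the counterclockwise boundary word, read from a corner, of the rectangle $R$ whose sides are $L$ and $p(k)/2-L=\lceil 2\sqrt{k}\rceil-L$; hence $|u|=p(k)$, and a direct matrix computation gives $[u]=[a,b]^{\operatorname{area}(R)}$. The constraint $\operatorname{area}(R)=L(\lceil 2\sqrt{k}\rceil-L)\geq k$ is exactly the quadratic inequality $L^{2}-\lceil 2\sqrt{k}\rceil L+k\leq0$, whose set of integer solutions is $[q_-(k),q_+(k)]$, so the admissible values of $L$ are precisely those in the statement. Each of the four swaps preserves word length and multiplies the represented element by $[a,b]^{-1}$; performing the prescribed sequence, which consists of $\operatorname{area}(R)-k$ swaps (the total area of the four corner Young tableaux that one deletes), produces a word $w$ with $|w|=p(k)$ and $[w]=[a,b]^{k}$. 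Since $l([a,b]^{k})=p(k)$, the word $w$ — and therefore each of its cyclic shifts and their inverses — is geodesic and represents $[a,b]^{\pm k}$; a geodesic representative of a dead end element is a dead end word (for the corresponding polyomino this is Lemma~\ref{L0}), so all of these words are dead end words.

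For (ii), let $w$ be a dead end word; after inverting if necessary assume $[w]=[a,b]^{k}$ with $k\geq1$. Then $w$ is geodesic, so $|w|=l([a,b]^{k})=p(k)$, and by Proposition~\ref{Pr2} the word $w$ is read off, from the origin and with the positive orientation, along the boundary of a minimal perimeter polyomino $P$ of area $k$ whose boundary passes through the origin. By Lemma~2 of \cite{K}, $P$ is obtained from a rectangle $R$ of perimeter $p(k)$ and area at least $k$ by deleting four Young tableaux from its corners; writing the horizontal side of $R$ as $L$ forces the vertical side to be $\lceil 2\sqrt{k}\rceil-L$ and, as above, $L\in[q_-(k),q_+(k)]$, while the counterclockwise boundary word of $R$ read from a corner is the word $u$ attached to this $L$. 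The key point is a dictionary: deleting a degree-$2$ unit square at a convex corner of a contractible polyomino amounts, on the cyclic boundary word, to applying exactly one of the four swaps, the type of the swap being dictated by the type of the convex corner, and conversely every swap is such a deletion. Deleting a Young tableau of area $t$ from a corner of $R$ is a sequence of $t$ such moves (at each stage the square removed sits at a convex corner of the current, still contractible, polyomino), and the four tableaux have total area $\operatorname{area}(R)-k$; hence the cyclic boundary word of $P$ arises from the cyclic word $u$ by exactly that many swaps, and changing the reading base point from a corner of $R$ to the origin replaces the word by a cyclic shift of it. Thus $w$ has the stated form, and, as noted, (iii) then follows from Proposition~\ref{Pr1}.

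The step I expect to be the main obstacle is making the square-deletion/swap dictionary watertight: one must check that every convex corner of every polyomino appearing in the deletion process is one of the four recognised types and that removing its square is literally the named swap; that all intermediate polyominoes stay contractible, so their boundaries are simple closed polygonal chains and the corresponding swaps are genuinely available at each stage (in particular that the four corner Young tableaux are small enough not to interfere with one another); and that the change of reading base point — a corner of $R$ versus the origin, which may well sit on a newly created staircase edge of $\partial P$ rather than on $\partial R$ — is accounted for entirely by cyclic shifts. The remaining ingredients — the quadratic identity behind $[q_-(k),q_+(k)]$, the length-and-area bookkeeping for the swaps, and the equivalence between dead end words and geodesic representatives of dead end elements — are routine.
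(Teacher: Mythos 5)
Your proposal is correct and follows essentially the same route as the paper, whose entire proof of Theorem~\ref{Th2} is a one-line appeal to Proposition~\ref{Pr1}, Proposition~\ref{Pr2}, and the deleting-squares description of minimal perimeter polyominoes from \cite{K}; you have simply written out the corner-deletion/swap dictionary that the paper leaves implicit. Note, however, that you silently read the rectangle as $L\times(\lceil 2\sqrt{k}\rceil-L)$ with $L(\lceil 2\sqrt{k}\rceil-L)-k$ deletions, which is the version that makes the statement true (already for $k=1$ the printed data fail: $u=ab^{2}a^{-1}b^{-2}$ has length $6\neq l([a,b])=4$ and two swaps send $[u]=[a,b]^{2}$ to the identity), rather than the literal $u=a^{L}b^{\lceil 2\sqrt{k}\rceil}a^{-L}b^{-\lceil 2\sqrt{k}\rceil}$ and swap count $L(2\lceil 2\sqrt{k}\rceil-L)-k$ appearing in the theorem, so your argument establishes the evidently intended, corrected statement.
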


\subsection{Acknowledgment}
The authors are grateful to A. V. Malyutin for useful discussions.

\section{Terminology and auxiliary results}

We use the terminology of combinatorial and geometric group theory that can be found in \cite{OH, CL}. Given a set of symbols $S$, we write $S^{-1} := \{s^{-1} \mid s \in S\}$. We also write $(s^{-1})^{-1} := s$ and $s^1 := s$. Denote by $F(S)$ the set of all words in alphabet $S \cup S^{-1}$. Given a word $w = s_1^{e_1} \ldots s_n^{e_n}$, we denote by $w^{-1} := s_n^{-e_n} \ldots s_1^{-e_1}$ the inverse word of the word $w$. By a cyclic shift of a word $w = s_0^{e_0} \ldots s_n^{e_n}$ we mean any word of the form $s_k^{e_k}s_{k+1}^{e_{k+1}} \ldots s_{k-1}^{e_{k-1}}$, where indexes are considered to be the residues modulo $n$. By $[u,v]$ we denote the commutator if two words, that is, the word $uvu^{-1}v^{-1}$. Any word $w \in F(S)$ defines a path in the Cayley graph, which ends in a group element denoted by $[w] \in G$. Below we write $S := \{a,b\}$.

Consider the following semigroup (anti) automorphisms of $F(a,b)$, which were defined in \cite{VM}: 
\begin{enumerate}
\item the eight involutions (include identity) induced by the permutations on $\{a,b,a^{-1},b^{-1}\}$ that send pairs of inverse symbols to pairs of inverse symbols;
\item the involution that sends every word to the inverse word;
\item the involution that rearranges the letters of every word in the reverse order.
\end{enumerate}
These transformations are called geometric involutions. In Lemma 2.2. of \cite{VM} it is proved that they send geodesic words to geodesic words. It is now hard not to show that they send dead end words to dead end words. Note that in the plane model of $H(\mathbb{Z})$ the first eight transformations correspond to the action of dihedral group $D_4$ of order 8 on the plane and transformation of the second type correspond to a translation of the polygonal chain's end to the origin and reversing the orientation.

The following explicit formula for the length of elements of $H(\mathbb{Z})$ is given in \cite{SK}. Let $g \in H(\mathbb{Z})$ has coordinates $(n,m)$ and area $k$. Let $0 \leq m \leq n$, $k \geq 0$ and $n > 0$, i.e. $(k,m,n) \neq (0,0,0)$. 
\begin{enumerate}
\item If $n \leq \sqrt{k}$, then $l(g) = 2\lceil 2\sqrt{k}\rceil - n - m$;
\item If $n \geq \sqrt{k}$ and
\begin{enumerate}
    \item $nm \geq k$, then $l(g) = n+m$;
    \item $nm \leq k$, then $l(g) = 2 \lceil k/n \rceil + n - m$.
\end{enumerate}
\end{enumerate}

We also need the explicit description of infinite geodesic words, stated below.

\begin{Lemma}[Theorem 1.1. in \cite{VM}]\label{VMLEMMA}
In the discrete Heisenberg group, the set of all right-infinite geodesic words in the alphabet $S := \{a, b, a^{-1}, b^{-1}\}$ is the union of the following two classes.
\begin{itemize}
\item[Class $\mathcal{A}$] homogeneous words, that is, the words without inverse letters;
\item[Class $\mathcal{B}$] the words of the form $Vxy^mx^{-1}y^{+\infty}$ and the words of the form $x^ny^{-1}x^myVy^{+\infty}$, where $\{x, y\}$ is a pair of elements from $S$ that are not inverse, $n$ is a nonnegative integer, $m$ is a positive integer, and $V$ is a word in the alphabet $\{x, y\}$ that can be brought to the form $x^i y^j$ by less than $m$ swaps of some neighboring letters.
\end{itemize}
\end{Lemma}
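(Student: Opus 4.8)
The plan is to argue entirely in the plane model of Section~1. A word $w$ in $\{a^{\pm1},b^{\pm1}\}$ is a lattice path starting at the origin whose projection ends at $(n,m)$ and whose area is the discrete integral $k=\int x\,dy=\sum_{\text{vertical steps}}\varepsilon_j x_j$; by the length formula $l([w])$ is the least number of unit steps of a lattice path with the same endpoint and the same area, and a finite or right-infinite word is geodesic precisely when each of its finite prefixes is. I will lean on the explicit length formula of Section~2, the geometric involutions, and in particular the \emph{reversal} involution (type (3)), whose effect on coordinates is $(n,m,k)\mapsto(n,m,nm-k)$, and the diagonal swap $a\leftrightarrow b$ (type (1)), acting by $(n,m,k)\mapsto(m,n,nm-k)$. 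The first reduction is that the projection of an infinite geodesic word is unbounded: were it contained in a box $[-R,R]^2$, the $N$-th prefix would satisfy $|k_N|\le R\,N$, while geodesicity in the large-area regime (1) would force $N=l([w_N])=2\lceil 2\sqrt{|k_N|}\rceil-n_N-m_N\sim 4\sqrt{|k_N|}$, hence $N\lesssim 16R$ is bounded, contradicting $N\to\infty$. The second reduction is that each coordinate is eventually monotone: if, say, both $b$ and $b^{-1}$ occurred infinitely often, the path would make infinitely many vertical reversals, and since the length formula bounds the area produced by a bounded excursion, all but finitely many such excursions would shorten some prefix. Combining the two reductions and applying the dihedral part of the involutions, I may assume the set $D$ of directions occurring infinitely often is either $\{+a,+b\}$ or $\{+b\}$.

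Suppose first $D=\{+a,+b\}$. For prefixes lying deep in the eventual staircase both coordinates $(n_N,m_N)$ are large and, the non-monotone head being bounded, the area satisfies $k_N=O(n_Nm_N)$; after a sign-normalizing involution the length formula puts us in regime (2a), giving $l=n_N+m_N$. Geodesicity then demands $N=n_N+m_N$, so the prefix wastes no step and the \emph{entire} word is monotone. Such a word is homogeneous, and conversely every monotone word is geodesic since each prefix is a monotone path realizing regime (2a); this is Class $\mathcal{A}$, the other three quadrants being supplied by the involutions.

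Suppose now $D=\{+b\}$, and normalize the tail to $b^{+\infty}$ and the net horizontal displacement to $n\ge0$, writing the word as $hb^{+\infty}$ with $h$ reaching $(n,M)$ and area $K$. The key simplification is a \emph{column reduction}: appending $b$ sends $(n,m,k)$ to $(n,m+1,k+n)$, so along the tail $k=K+(m-M)n$, and applying reversal gives $l(n,m,k)=l(n,m,nm-k)=l(n,m,\kappa)$ with the \emph{constant} $\kappa=Mn-K$. For fixed $n,\kappa$ the length formula yields $l(n,m,\kappa)=m+C(n,\kappa)$ once $m$ exceeds an explicit threshold, so $hb^{+\infty}$ is geodesic iff $h$ is geodesic, reaches height above that threshold, and its straight-up continuation stays linear --- a \emph{bounded} condition. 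It then remains to enumerate the geodesic heads admitting a vertical continuation; carrying out the length-formula bookkeeping, exactly two shapes survive, namely $V\,a\,b^{m}a^{-1}$ and $a^{n}b^{-1}a^{m}b\,V$ with $V$ a word in $\{a,b\}$, which are Forms $1$ and $2$. Finally I would translate the hypothesis on $V$: a single swap $ab\mapsto ba$ lowers the area by $1$, so if $V$ reaches $(i,j)$ then the number $s$ of swaps needed to sort $V$ into $a^ib^j$ equals $ij-\mathrm{area}(V)$; a direct computation shows the column area of either form equals $\kappa=s-m$, and the $(2a)$--$(2b)$ regime boundary at $\kappa=0$ is crossed exactly when $s=m$. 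Thus the head is geodesic iff $s<m$, which is precisely the condition that $V$ can be brought to $a^ib^j$ by fewer than $m$ swaps.

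\textbf{Main obstacle.} The delicate point is the enumeration in the last step: proving that the two displayed shapes are the \emph{only} geodesic heads that admit a vertical continuation, and that \emph{every} intermediate prefix --- inside $V$, inside the $b^m$ block, and across the turn --- is geodesic exactly when $s<m$. This requires a careful case analysis against all four regime boundaries of the length formula together with a matching argument identifying the optimal area-creating detour as a width-one spike (Form $1$) or a bottom overshoot (Form $2$); it is the equivalences to the swap count and to the sign of $\kappa$ that make the boundaries line up. The remaining points --- that these words are indeed geodesic, that the two forms are interchanged and completed to the general non-inverse pair $\{x,y\}$ by the geometric involutions, and that the threshold on $M$ is harmless --- are then routine.
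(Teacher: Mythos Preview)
The paper does not prove this lemma at all: it is quoted verbatim as Theorem~1.1 of \cite{VM} and used as a black box in the proof of Proposition~\ref{Pr1}. There is therefore nothing in the present paper to compare your proposal against.

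As for the proposal itself, your outline is broadly the right shape for a reconstruction of the Vershik--Malyutin argument: the reduction to an eventually monotone direction set, the normalisation via the dihedral involutions, and the column reduction $(n,m,k)\mapsto(n,m,\kappa)$ with $\kappa$ constant along the $b^{+\infty}$ tail are all correct moves. However, the step you flag as the main obstacle really is one, and your sketch does not yet contain the idea that closes it. The claim that only the two shapes $Vab^ma^{-1}$ and $a^nb^{-1}a^mbV$ survive as geodesic heads admitting a vertical continuation requires ruling out, e.g., heads with more than one horizontal reversal or with a $b^{-1}$ block of length greater than one, and the length formula alone does not immediately exclude these without a finer structural argument about how area is optimally accumulated. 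Your earlier monotonicity reduction is also underspecified: the assertion that infinitely many vertical reversals force some prefix to be non-geodesic needs a quantitative bound relating the excursion width to the area gained, and this is not immediate from the regime~(1) estimate you invoke. If you intend to supply a self-contained proof rather than cite \cite{VM}, these two points need to be filled in.
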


\begin{figure}
\centering
\includegraphics[width = 14cm]{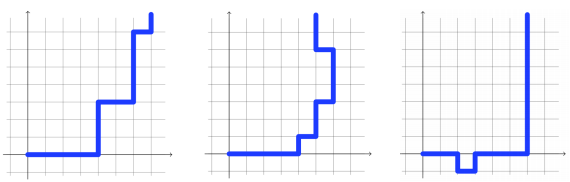}
\caption{Examples of shapes of the infinite geodesic words.}\label{Infinite}
\end{figure}

Also, we need some auxiliary results. The first one states that $[a,b]^k$ are dead end elements for $k \neq 0$, and the second states that geodesic words represent simple polygonal chains.

\begin{Lemma}\label{L1}
Suppose $w \in F(a,b)$ is a geodesic word representing an element of the form $[a,b]^k$ for $k \neq 0$. Then $w$ is a dead end word.
\end{Lemma}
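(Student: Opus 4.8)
The plan is to show that appending any generator or its inverse to $w$ strictly decreases the length of the element it represents. Since $[w] = [a,b]^k$ with $k \neq 0$, the abelianization ${\rm Ab}([w])$ is trivial, so $[w]$ has coordinates $(0,0)$ and area $k$; geometrically the path of $w$ in the plane model is a closed polygonal chain enclosing signed area $k$. First I would invoke the explicit length formula quoted above to compute $l([a,b]^k)$: with $(n,m) = (0,0)$, one has $n \le \sqrt{k}$ (in the $k>0$ normalization, after applying a geometric involution if $k<0$), so case (1) of the formula gives $l([a,b]^k) = 2\lceil 2\sqrt{k}\rceil - 0 - 0 = 2\lceil 2\sqrt{|k|}\rceil = p(|k|)$, the minimal perimeter. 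This already forces $w$ to be a geodesic of length exactly $p(|k|)$, the best possible.

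Next, for each $s \in \{a^{\pm 1}, b^{\pm 1}\}$, I would compute $l([a,b]^k s)$ directly. Multiplying $[a,b]^k$ on the right by $a^{\pm 1}$ gives an element with coordinates $(\pm 1, 0)$ and area $k$; multiplying by $b^{\pm 1}$ gives coordinates $(0, \pm 1)$ and area $k$ (using the identities $[a,b]^k b^m a^n \cdot a^{\pm1} = [a,b]^k b^m a^{n\pm1}$ and $[a,b]^k b^m a^n \cdot b^{\pm 1} = [a,b]^{k \pm n} b^{m\pm1} a^n$ recorded in the introduction, with $n=m=0$ so the area is unchanged). In all four cases the new element has $|n|, |m| \le 1 \le \sqrt{|k|}$ provided $|k| \ge 1$, so case (1) of the length formula applies again (after normalizing signs via a geometric involution, which preserves length), yielding $l([a,b]^k s) = 2\lceil 2\sqrt{|k|}\rceil - |n| - |m| = p(|k|) - 1 < p(|k|) = l([a,b]^k)$. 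Hence $l([w]s) < l([w]) = |w|$ for every $s$, so $ws$ is not geodesic and therefore $w$ is a dead end word; a fortiori $[a,b]^k$ is a dead end element.

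I should be slightly careful about the degenerate small cases, e.g. $|k| = 1$, where $\sqrt{|k|} = 1$ and the inequalities $|n|,|m| \le \sqrt{|k|}$ hold with equality — case (1) of the formula is still valid there (it is stated for $n \le \sqrt{k}$, inclusive), so nothing breaks. One also has to make sure the hypotheses $0 \le m \le n$, $k \ge 0$, $n > 0$ of the length formula are met after reduction: for $[a,b]^k$ itself the triple is $(k,0,0)$ with $n = 0$, which is the excluded case $(k,m,n) \ne (0,0,0)$ only when $k \ne 0$ — indeed $k \ne 0$ is our hypothesis, and the formula's statement "$n \le \sqrt k$, $k \ge 0$, $(k,m,n)\ne(0,0,0)$" does cover $(k,0,0)$ with $k \ge 1$. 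The main obstacle is really just this bookkeeping: arranging the signs and the ordering of coordinates so that the quoted length formula is literally applicable, which is handled uniformly by pre-composing with an appropriate geometric involution (these preserve both length and the property of being a dead end word, as noted after the definition of geometric involutions). Once the normalization is in place, the computation is immediate from case (1) of the formula.
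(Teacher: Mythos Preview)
Your argument is correct, but it takes a different route from the paper's own proof. The paper does not compute $l([a,b]^k s)$ at all; instead it argues that $[a,b]^k$ is a dead end element by exhibiting, for each letter $s\in S\cup S^{-1}$, a geodesic representative of $[a,b]^k$ that \emph{ends} in $s$. Concretely, it takes $u$ to be the boundary word of a minimal perimeter polyomino of area $k$ through the origin (geodesic by Lemma~\ref{L0}), observes that $u$ contains all four letters, and then notes that every cyclic shift of $u$ is again a geodesic representative of $[a,b]^k$; choosing the shift appropriately makes the last letter whatever one wants, so $l([a,b]^k s^{-1}) = l([a,b]^k)-1$ for every $s$.

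What each approach buys: your computation via the length formula is entirely self-contained and avoids invoking Lemma~\ref{L0} or the polyomino picture at this stage; the bookkeeping with geometric involutions you describe is exactly what is needed and is routine. The paper's proof, on the other hand, is shorter once Lemma~\ref{L0} is in hand and dovetails with the polyomino description that the rest of the paper is built on. Your caution about the hypothesis $n>0$ in the quoted length formula is warranted but harmless: the formula in case~(1) is valid for $(n,m)=(0,0)$ with $k\ge 1$, and after normalizing signs the four neighbors all fall under case~(1) as you say.
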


\begin{Lemma}\label{SIMPLE}
Let $w \in F(a,b)$. If $w$ is geodesic in $H(\mathbb{Z})$, then the corresponding polygonal chain in the geometric model of $H(\mathbb{Z})$ is simple.
\end{Lemma}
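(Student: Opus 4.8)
The plan is to rule out self-intersections of the chain by localizing any such intersection to an \emph{interior} factor of $w$ that represents a nonzero power of $[a,b]$, and then to invoke Lemma \ref{L1}. Throughout I use two standard facts: every subword (in particular every prefix and every suffix) of a geodesic word is geodesic, and $w^{-1}$ is geodesic whenever $w$ is.

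First I would fix notation. Write $w = s_1\cdots s_n$ and let $v_0,v_1,\ldots,v_n \in \mathbb{Z}\times\mathbb{Z}$ be the successive vertices of the corresponding path, so that $v_i$ is the image of $[s_1\cdots s_i]$ under the abelianization $\mathrm{Ab}\colon H(\mathbb{Z})\to\mathbb{Z}\times\mathbb{Z}$, with $\ker\mathrm{Ab} = \langle[a,b]\rangle$. The heart of the argument is the claim: \emph{if $w$ is geodesic and $v_s = v_t$ with $0 \le s < t \le n$, then $(s,t) = (0,n)$.} Suppose not, and split $w = w_1 w_2 w_3$ with $|w_1| = s$, $|w_2| = t-s \ge 1$, $|w_3| = n-t$. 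Since $\mathrm{Ab}$ is a homomorphism and $v_s = v_t$, we get $\mathrm{Ab}([w_2]) = 0$, hence $[w_2] = [a,b]^j$ for some integer $j$; moreover $j \ne 0$, since otherwise the nonempty geodesic word $w_2$ would represent the identity. Thus $w_2$ is a geodesic word for $[a,b]^j$ with $j\ne 0$, so it is a dead end word by Lemma \ref{L1}. But $w_2 w_3$ is a suffix of $w$ and hence geodesic, so if $w_3 \ne \emptyset$ its first letter $\sigma$ makes $w_2\sigma$ a geodesic prefix of $w_2 w_3$, contradicting that $w_2$ is a dead end word; therefore $w_3 = \emptyset$ and $t = n$. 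As $(s,n)\ne(0,n)$ we have $s\ge 1$, so $w_1 \ne \emptyset$. Now apply the same reasoning to the geodesic word $w^{-1} = w_2^{-1}w_1^{-1}$: its prefix $w_2^{-1}$ represents $[a,b]^{-j}$ and is again a dead end word by Lemma \ref{L1}, while the first letter of the nonempty word $w_1^{-1}$ produces a geodesic prefix $w_2^{-1}\sigma'$ of $w^{-1}$---a contradiction. This proves the claim.

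It remains to deduce simplicity of the polygonal chain. If $v_0 \ne v_n$, the claim forces $v_0,\ldots,v_n$ to be pairwise distinct; since two distinct unit lattice edges meet only at a common endpoint, the piecewise-linear path $[0,n]\to\mathbb{R}^2$ is injective (an interior point of one edge lies on no other edge, and a vertex is attained only at its integer time), so the image is homeomorphic to $[0,1]$. If $v_0 = v_n$, the same reasoning applied to $v_0,\ldots,v_{n-1}$ shows the path is injective on $[0,n)$ and closes up, so the image is homeomorphic to $S^1$; the short degenerate cases $|w|\le 2$ cannot arise here, since a geodesic word contains no factor $\sigma\sigma^{-1}$.

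The main obstacle is the claim, and it rests entirely on Lemma \ref{L1}: a crossing strictly inside a geodesic word would expose an interior factor that is simultaneously a dead end word and is immediately followed---or, after inverting, preceded---by another letter, which is impossible. The concluding topological step, namely reducing overlaps of the chain to coincidences of lattice vertices and discarding the trivial short cases, is routine.
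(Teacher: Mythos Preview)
Your proof is correct and follows essentially the same approach as the paper: locate a subword representing a nonzero power of $[a,b]$, invoke Lemma~\ref{L1} to conclude it is a dead end word, and derive a contradiction with its being a \emph{proper} subword of a geodesic. Your treatment is more careful than the paper's---you explicitly handle the suffix case via the inverse word and spell out the passage from vertex-distinctness to topological simplicity---but the core idea is identical.
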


\section{Proofs of the main results}

\begin{proof}[Proof of Lemma \ref{L0}]
It is easily follows from the explicit length function formula for $H(\mathbb{Z})$ stated above and the fact that $p(k) = 2 \lceil 2 \sqrt{k} \rceil$.
\end{proof}

\begin{proof}[Proof of Lemma \ref{L1}]
Recall that if an element is a dead end element, then any geodesic representative of that element is a dead end word. Note that if for any $s \in S \cup S^{-1}$ there exists a geodesic representative of $g$ whose last letter is $s$, then $g$ is a dead end element. Using this, we prove that $[a,b]^k$ is a dead end for $k \neq 0$. Since the inverse of a dead end element is itself a dead end element, we assume that $k > 0$. Choose a geodesic representative $u$ of $[a,b]^k$ in the form of a boundary of positively-oriented minimal perimeter polyomino with area $k$ whose boundary the contains the origin. Note that $u$ contains all possible letters from $S \cup S^{-1}$. Note also that any cyclic shift of $u$ is a geodesic representative of $[a,b]^k$. As noticed above, this shows that $[a,b]^k$ is a dead end element, and hence any geodesic representative $w$ of $[a,b]^k$ is a dead end word.
\end{proof}

\begin{proof}[Proof of Lemma \ref{SIMPLE}]
Assume the polygonal chain is not simple. Choose a closed polygonal chain lying in it and consider a subword $u$ of $w$ that represents the corresponding shape. Since the polygonal subchain is closed, $[u] = [a,b]^k$ and hence $u$ is a dead end word by Lemma \ref{L1}. Since $u$ is a subword of the geodesic word $w$, then $u=w$, which contradicts the fact that the chain is not simple.
\end{proof}

\begin{proof}[Proof of Theorem \ref{Th1}]
From Lemma \ref{L1} it follows that any element of the form $[a,b]^k$, $k\neq 0$, is a dead end. Now let $g \in H(\mathbb{Z})$ be a dead end element. Suppose $g$ has coordinates $(n,m)$ and area $k$, where $k,m,n \in \mathbb{Z}$. Take a word $w \in F(a,b)$ with a property $[w] = g$ and $|w| = l(g)$. By applying a suitable geometric involution of the first type, one can assume that the coordinates of $g$ satisfy 
$0 \leq m \leq n$. Note that geometric involution of the third type preserves coordinates but reverse the orientation. Using this, one can assume that $k \geq 0$. Firstly, assume that $n \leq \sqrt{k}$. 
The latter is equivalent to $n \leq \lceil \sqrt{k} \rceil$. We have $l(g) = 2\lceil 2\sqrt{k}\rceil - n - m$. We put $h = ga^{-1}$. It is easy to see that $h$ has coordinates $(n-1,m)$ and area $k$. Since $(n-1)m < nm < k$ and $n-1 < n \leq \sqrt{k}$, we have $l(h) = 2\lceil 2\sqrt{k}\rceil - (n-1) - m = l(g)+1 = |wa^{-1}|$, and hence $wa^{-1}$ is a geodesic word. This contradicts the fact that $w$ is a dead end word. 

Now we assume that $n > \sqrt{k}$. The latter is equivalent to $n \geq \lceil \sqrt{k}\rceil $. Consider two cases:
\begin{enumerate}
\item $nm \geq k$. In this case, we have $l(g) = n+m$. Note that $g$ has a positive word representative. Indeed, by dividing with a reminder $k = qm + r$, where $0 \leq r < m$, we see that $u = a^qb^{m-r}ab^ra^{n-q-1}$ has area $k$ and coordinates $(n,m)$. The length of $u$ is $q+m-r+1+r+n-q-1 = m+n$, and hence $u$ is a geodesic representative of $g$. Since all positive words are geodesic, $ua$ and $ub$ are geodesic words too. This contradicts the fact that $g$ is a dead end.
\item $nm < k$. In this case we have
$l(g) =  2 \lceil k/n \rceil + n - m$. Let $h = gb$. It is easy to see that $h$ has coordinates $(n, m+1)$ and area $K := k+n$. Consider two subcases.
\begin{enumerate}
\item $n < \sqrt{n+k}$. In this case $n \leq \sqrt{K},$ and hence we have $l(h) = 2\lceil 2\sqrt{k+n}\rceil - n - (m+1)$. We claim that $l(h) = l(g)+1$. By the triangle inequality, $|l(h) - l(g)| \leq 1.$ Since the relations in $H(\mathbb{Z})$ have even length, $l(h) \neq l(g)$. It remains to prove that $l(h) \geq l(g)$. The latter is equivalent to $2\lceil 2\sqrt{k+n}\rceil - n - m-1>2 \lceil k/n \rceil + n - m$, i.e. $A \geq 0$ for $A := 2\lceil 2\sqrt{k+n}\rceil - \lceil k/n \rceil - 2n - 1$. Since $\lceil 2x\rceil>2y$ for all $x>y$, then $\lceil 2\sqrt{k+n}\rceil > 2n$. Furthermore, from $n\sqrt{k} \geq k$ it follows that $\left\lceil k/n \right\rceil \leq \lceil \sqrt{k} \rceil.$ Finally, from $n > \lceil\sqrt{k}\rceil$ it follows that $n \geq \lceil\sqrt{k}\rceil + 1$, since both of the numbers are integers. Now we have
\begin{align*}
A = 2\lceil 2\sqrt{k+n}\rceil - 2\left\lceil k/n\right\rceil -2n -1 > 4n - 2\lceil \sqrt{k}\rceil - 2n - 1 = 2(n - \lceil \sqrt{k}\rceil) - 1 \geq 2 - 1 \geq -1.
\end{align*}
It follows that $A > -1$ and hence $A \geq 0$.
\item $n \geq \sqrt{n+k}$. In this case $n(m+1) = nm+m < k + m \leq k+n = K$ and $n \geq \sqrt{K}$. We have $l(h) = 2 \lceil (k+n)/n \rceil + n - (m+1) = 2 \lceil k/n \rceil + 2 + n - (m+1) = l(g)+1 = |wb|$. This means that $wb$ is a geodesic word, which contradicts to the fact that $w$ is a dead end word.
\end{enumerate}
\end{enumerate}
The theorem is proved.
\end{proof}

\begin{figure}[H]
\centering
\includegraphics[width = 7cm]{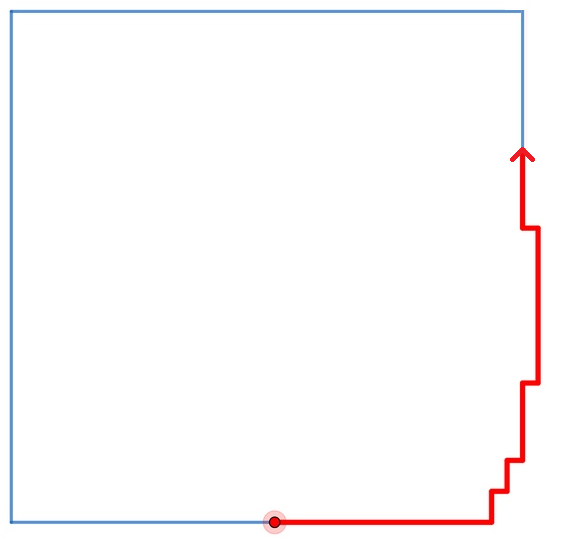}
\caption{Any prefix of an infinite geodesic word from the class $\mathcal{B}$ corresponds to a prefix of a minimal perimeter polyomino boundary.}\label{Prefix}
\end{figure}

\begin{proof}[Proof of Proposition \ref{Pr2}]
It is easy to see that the map is an injective function. We claim that this function is surjective. One has to prove that any geodesic representative $w$ of $[a,b]^k$ corresponds to a boundary of an oriented minimal perimeter polyomino. Without loss of generality, we can assume that $k>0$. It follows from Lemma \ref{SIMPLE} that $w$ represents a simple polygonal chain and hence a boundary of a polyomino $P_1$ with area $|k|$. We claim that $P_1$ is a minimal perimeter polyomino. Consider a minimal perimeter polyomino $P_2$ with area $k$. Note that its boundary represents a word $u$ corresponding to $[a,b]^k$. Since $P_2$ is minimal, $|u| \leq |w|$. Since $w$ is geodesic, $|w| \leq |u|$. It follows that $|w| = |u|$, hence $P_1$ is minimal. The proposition is proved.
\end{proof}

\begin{proof}[Proof of Proposition \ref{Pr1}]
Let $w$ be a geodesic word. If $w$ is a prefix of a dead end word, there is nothing to prove. Suppose $w$ is a prefix of an infinite geodesic word $u$. Due to Lemma \ref{VMLEMMA}, $u$ lies in either the class $\mathcal{A}$ or the class $\mathcal{B}$. By increasing the length of the finite prefix, we can find a minimal perimeter polyomino that contains the prefix in its boundary, see Figure \ref{Prefix}. By Lemma \ref{L0}, the boundary of a minimal perimeter polyomino is a dead end word. Therefore, $w$  is a prefix of a dead end word.
\end{proof}

\begin{proof}[Proof of Theorem \ref{Th2}]
The theorem follows from Proposition \ref{Pr1}, Proposition \ref{Pr2}, and the explicit description of the minimal perimeter polyomino, stated above.
\end{proof}

\end{document}